\theoremstyle{theorem}
\newtheorem{theorem}{Theorem}[section]
\newtheorem{prop}[theorem]{Proposition}
\newtheorem{corollary}[theorem]{Corollary}
\theoremstyle{definition}
\newtheorem{definition}[theorem]{Definition}
\theoremstyle{example}
\newtheorem{example}[theorem]{Example}
\newcommand{\boundellipse}[3]
{(#1) ellipse (#2 and #3)
}
\DeclareMathOperator{\mex}{mex}
\title[{Overpartitions and Mex Sequences}]{Overpartitions and Kaur, Rana, and Eyyunni's Mex Sequences}
\author{Brian Hopkins}
\address{Department of Mathematics and Statistics, 
Saint Peter's University, Jersey City, NJ 07306, USA}
\email{bhopkins@saintpeters.edu}
\author{James A. Sellers}
\address{Department of Mathematics and Statistics, 
University of Minnesota Duluth, Duluth, MN 55812, USA}
\email{jsellers@d.umn.edu}
\subjclass[2020]{Primary 05A17; Secondary 11P81.}
\keywords{partitions, overpartitions, mex sequences, combinatorial proofs}
\date{}
\begin{document}

\maketitle

\begin{abstract}
Kaur, Rana, and Eyyunni recently defined the mex sequence of a partition and established, by analytic methods, connections to two disparate types of partition-related objects.  We make a bijection between partitions with certain mex sequences and a uniform family of overpartitions which allows us to provide combinatorial proofs of their results, as they requested.
\end{abstract}

\section{Introduction and background}

A partition of a positive integer $n$ is a finite sequence of positive integers $\lambda = (\lambda_1, \ldots, \lambda_j)$ with $\sum \lambda_i = n$, also written $|\lambda| = n$.  
The $\lambda_i$, called the parts of $\lambda$, are ordered so that $\lambda_1 \ge \cdots \ge \lambda_j$.  We write $P(n)$ for the set of all partitions of $n$ and $p(n) = \vert P(n) \vert$ with the convention that $p(0) =1$.   For example,
\[P(4) = \{(4), (3,1), (2,2), (2,1,1), (1,1,1,1)\}\]
and $p(4) = 5$.  

Recently, much attention has been paid to the mex of a partition, a simple partition statistic whose name is a portmanteau of minimal excludant.  

\begin{definition}
Given a partition $\lambda$, the mex of $\lambda$ is the least positive integer that is not a part of $\lambda$.
\end{definition}

This term was first applied to partitions in 2019 by Andrews and Newman \cite{an} but the concept was considered by 2006 at the latest by Grabner and Knopfmacher \cite{gk}.  Several authors have studied the mex statistic from a wide variety of perspectives; see, for instance, \cite{hs20, hs24, hss, hsy, k, y}.

Among the many generalizations of $\mex(\lambda)$ that have been considered, here we focus on the mex sequence of a partition defined in 2024 by Kaur, Rana, and Eyyunni \cite{kre}.

\begin{definition}
Given a partition $\lambda$, the mex sequence of $\lambda$ is the longest sequence of consecutive missing integers in the partition starting with $\mex(\lambda)$.
\end{definition}

Note that this is a different set-valued generalization of the mex than those considered by Knopfmacher and Warlimont \cite{kw} and Bhoria, Eyyunni, and Li \cite{bel}.  Several examples of mex and mex sequences follow.

\begin{example}
(a) The partition $(4)$ has mex  $1$ and mex sequence $(1,2,3)$.  \\
(b) The partition $(9,4,4,3,1)$ has mex $2$ and mex sequence $(2)$.  \\
(c) The partition $(4,3,3,3,2,1,1)$ has mex is $5$ and mex sequence $(5,6,7,\dots)$.  
\end{example}

With this, Kaur, Rana, and Eyyunni \cite{kre} defined the following family of partitions.  

\begin{definition}
For any integer $r\geq 1$, let $P_r^{\textrm{mex}}(n)$ be the set of partitions of $n$ whose mex sequences have length at least $r$ with
$p_r^{\textrm{mex}}(n) = \vert P_r^{\textrm{mex}}(n) \vert$.
\end{definition}

For all $r\geq 1$ and any $n \ge 0$, it is clear that $P_{r+1}^{\textrm{mex}}(n) \subseteq P_{r}^{\textrm{mex}}(n)$.  Moreover, $P_1^{\textrm{mex}}(n) = P(n)$ for all $n$ since the mex sequence of any partition of $n$ has length at least 1 (the mex itself).

In \cite{kre}, the authors use elementary generating function manipulations, as well as a version of Heine's transformation of a ${}_2\phi_1 $ summation formula, to obtain the following generating function identity for $p_r^{\textrm{mex}}(n)$.  It incorporates the standard Pochhammer notation, i.e., for $n\geq 1,$ $(a;q)_n = (1-a)(1-aq)\cdots(1-aq^{n-1})$  and  $(a;q)_\infty = \lim_{n \rightarrow \infty} (a;q)_n$ where we assume that $|q|<1$.  

\begin{theorem}[Theorem 10, \cite{kre}]
\label{thm:kre_genfn}
For $r$ a positive integer, the generating function for $p_r^{\textrm{mex}}(n)$ is
\[\sum_{n \ge 0} p_r^{\textrm{mex}}(n) q^n = \frac{1}{(q;q^2)_\infty (q^{r+1};q^2)_\infty}.\]
\end{theorem}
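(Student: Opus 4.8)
The plan is to prove the identity bijectively, in keeping with the request for a combinatorial argument. First I would unwind the definition: a partition $\lambda$ lies in $P_r^{\text{mex}}(n)$ with $\mex(\lambda)=m$ exactly when $1,2,\ldots,m-1$ are all parts of $\lambda$ while $m,m+1,\ldots,m+r-1$ are all absent. Splitting $\lambda$ into its parts that are $\le m-1$ and those that are $\ge m+r$, and subtracting the forced ``staircase'' $(m-1,\ldots,2,1)$ from the small parts, identifies the small-part data with an arbitrary partition into exactly $m-1$ distinct parts (generating function $q^{\binom m2}/(q;q)_{m-1}$) and the large-part data with an arbitrary partition into parts $\ge m+r$ (generating function $1/(q^{m+r};q)_\infty$). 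Summing over the mex $m$ gives
\[\sum_{n\ge0}p_r^{\text{mex}}(n)\,q^n=\sum_{m\ge1}\frac{q^{\binom m2}}{(q;q)_{m-1}\,(q^{m+r};q)_\infty}.\]

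Next I would pin down the combinatorial meaning of the target. Since $1/(q;q^2)_\infty=(-q;q)_\infty$ counts partitions into odd parts (equivalently into distinct parts) and $1/(q^{r+1};q^2)_\infty$ counts partitions into parts $\ge r+1$ of the same parity as $r+1$, the right-hand side of Theorem~\ref{thm:kre_genfn} counts a uniform family of overpartitions: pairs consisting of a partition into odd parts together with a second, overlined, partition whose parts are $\ge r+1$ of parity $r+1$. The goal then becomes a weight-preserving bijection between the staircase model above and this overpartition family, valid uniformly in $r$; note that for odd $r$ the two part-classes are disjoint, while for even $r$ they overlap, producing genuine overlines.

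To build the bijection I would convert the distinct-part piece $\gamma$ (with exactly $m-1$ parts) into odd parts by a Sylvester/Glaisher-type map, arranging that the parameter $m$, which simultaneously counts the parts of $\gamma$ and sets the threshold $m+r$ for the large-part piece $\delta$, is recorded by the overlined parts of the image, so that the bound $\ge r+1$ emerges automatically. As a consistency check (and as an independent analytic route), one can verify the generating-function identity directly: using $(q;q)_{m-1}(q^{m+r};q)_\infty=(q;q)_\infty/(q^m;q)_r$, the sum collapses to $\frac{1}{(q;q)_\infty}\sum_{m\ge1}q^{\binom m2}(q^m;q)_r$, so the theorem is equivalent to
\[\sum_{m\ge1}q^{\binom m2}(q^m;q)_r=\frac{(q^2;q^2)_\infty}{(q^{r+1};q^2)_\infty}.\]
For $r=1$ this telescopes, and in general it should follow by induction on $r$ or from a suitable $q$-series transformation.

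The main obstacle is the bijection itself: engineering a single map that handles the coupling between the number of parts of $\gamma$ and the minimum part of $\delta$, and that absorbs the parity of $r$ uniformly (ordinary versus overlined parts). Equivalently, on the analytic side the difficulty is establishing the collapsed identity for $\sum_m q^{\binom m2}(q^m;q)_r$ without merely reproducing Kaur, Rana, and Eyyunni's appeal to Heine's transformation; supplying a genuinely combinatorial reason for this identity is precisely what the overpartition bijection is meant to achieve.
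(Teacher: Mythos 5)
Your reduction is sound as far as it goes: classifying $\lambda\in P_r^{\textrm{mex}}(n)$ by $m=\mex(\lambda)$, splitting off the no-gaps portion below $m$ and the parts $\ge m+r$, and collapsing the Pochhammer symbols via $(q;q)_{m-1}(q^{m+r};q)_\infty=(q;q)_\infty/(q^m;q)_r$ are all correct, and the identity you arrive at,
\[
\sum_{m\ge1}q^{\binom{m}{2}}(q^m;q)_r=\frac{(q^2;q^2)_\infty}{(q^{r+1};q^2)_\infty},
\]
is indeed equivalent to Theorem \ref{thm:kre_genfn} (it telescopes to $1$ at $r=1$, and checks numerically at $r=2$). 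But the proof stops exactly where the difficulty begins. ``It should follow by induction on $r$ or from a suitable $q$-series transformation'' is not an argument: since the right-hand side satisfies the step-two recurrence $R_{r+2}=(1-q^{r+1})R_r$, an induction would require both the nontrivial sum identity $S_{r+2}=(1-q^{r+1})S_r$ and a separate base case at $r=2$ (which, unlike $r=1$, does not telescope), and you establish neither. Likewise the bijection is not constructed --- you yourself name it as ``the main obstacle.'' The unproven identity is precisely the point at which Kaur, Rana, and Eyyunni invoke Heine's transformation, so the proposal in effect reproduces their setup and then defers their key step. That is a genuine gap, not a proof.

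For context: the paper under review does not reprove Theorem \ref{thm:kre_genfn} at all --- it is quoted from \cite{kre}, with the remark that the original proof uses generating function manipulations and Heine's transformation. However, the combinatorial completion you are reaching for is essentially the content of the paper's Theorem \ref{mexseqover}. There, instead of summing over $m$ and pushing the distinct-parts piece through a Glaisher-type map while tracking the threshold $m+r$, one works directly with $\kappa\in P_r^{\textrm{mex}}(n)$: from each part exceeding $\mex(\kappa)$ one subtracts an amount $\sigma_\ell$ (nondecreasing as $\ell$ decreases, growing by at most $1$ at each step) chosen to force each difference $\kappa_\ell-\sigma_\ell$ to exceed $r$ with the parity of $r+1$; the augmented small parts $(\sigma_1,\ldots,\sigma_j)$ form a no-gaps partition whose conjugate, by Proposition \ref{nogaps}, supplies the overlined distinct parts. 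This lands in $\overline{P}_r(n)$ of Definition \ref{opr}, whose generating function is transparently $(-q;q)_\infty/(q^{r+1};q^2)_\infty=1/\bigl((q;q^2)_\infty(q^{r+1};q^2)_\infty\bigr)$, yielding exactly the combinatorial proof of Theorem \ref{thm:kre_genfn} your outline promises. The coupling between the number of distinct parts and the large-part threshold, which you identify as the sticking point, is dissolved there because the parity correction is performed partwise on $\kappa$ itself rather than after separating the two pieces.
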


Aiming for a combinatorial result, Kaur, Rana, and Eyyunni defined a set of restricted partitions and a particular version of two-colored partitions as follows.

\begin{definition}
Let $P_e^{>r}(n)$ be the set of partitions of $n$ in which no even integer less than $r$ is allowed to be a part with $p_e^{>r}(n) = \vert P_e^{>r}(n) \vert$.  

Let $P_{o,2}^{>r}(n)$ be the set of partitions of $n$ into odd parts where parts greater than $r$ come in two colors with $p_{o,2}^{>r}(n) = \vert P_{o,2}^{>r}(n) \vert$.
\end{definition}

For example, using subscripts for colors, $p_{o,2}^{>2}(6) = 8$ from
\[\{(5_1, 1), (5_2, 1), (3_1, 3_1), (3_1, 3_2), (3_2, 3_2), (3_1, 1, 1, 1), (3_2, 1, 1, 1), (1,1,1,1,1,1)\}.\]
Note that $(3_2, 3_1)$ is not a distinct partition in $P_{o,2}^{>2}(6)$ and the parts 1 can be considered as $1_1$, i.e., having the first color.

The following result then follows from elementary interpretations of the generating function identity in Theorem \ref{thm:kre_genfn}.

\begin{corollary}[Corollary 11, \cite{kre}] \label{needcomb}
\label{cor:kre_comb_interpretation}  For each integer $r \ge 1$,
\[p_r^{\textrm{mex}}(n) = \begin{cases} p_e^{>r}(n) & \text{if $r$ is odd,} \\ p_{o,2}^{>r}(n) & \text{if $r$ is even.}\end{cases} \]
\end{corollary}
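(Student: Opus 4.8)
The plan is to compute the ordinary generating function for each of the two restricted partition counts directly as an infinite product and to verify that, in the appropriate parity regime, this product coincides with the right-hand side of Theorem~\ref{thm:kre_genfn}; the corollary then follows immediately by comparing coefficients of $q^n$.

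First I would handle the odd case. When $r$ is odd, a partition in $P_e^{>r}(n)$ uses parts drawn from the odd positive integers together with the even integers that are at least $r$, and since $r$ is odd the smallest admissible even part is $r+1$. The generating function for partitions into odd parts is $1/(q;q^2)_\infty$, and the generating function for partitions into even parts taken from $\{r+1,r+3,r+5,\dots\}$ is $1/(q^{r+1};q^2)_\infty$. Multiplying these independent contributions yields exactly $1/\bigl((q;q^2)_\infty (q^{r+1};q^2)_\infty\bigr)$, matching the theorem, so $p_r^{\textrm{mex}}(n)=p_e^{>r}(n)$ for odd $r$.

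Next I would handle the even case. When $r$ is even, a partition in $P_{o,2}^{>r}(n)$ uses only odd parts: the odd parts not exceeding $r$, namely $1,3,\dots,r-1$, appear in a single color and contribute the finite product $1/(q;q^2)_{r/2}$, while each odd part exceeding $r$, namely $r+1,r+3,\dots$, may carry either of two colors and hence contributes a squared factor $1/(q^{r+1};q^2)_\infty^2$. The key algebraic step is the Pochhammer splitting $(q;q^2)_\infty = (q;q^2)_{r/2}\,(q^{r+1};q^2)_\infty$, which lets me rewrite the product $1/\bigl((q;q^2)_{r/2}(q^{r+1};q^2)_\infty^2\bigr)$ as $1/\bigl((q;q^2)_\infty (q^{r+1};q^2)_\infty\bigr)$, again matching the theorem and giving $p_r^{\textrm{mex}}(n)=p_{o,2}^{>r}(n)$ for even $r$.

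I do not expect a serious obstacle here, since the argument is pure product manipulation flowing from the already-established generating function. The one point demanding genuine care is the parity-dependent indexing: tracking that the smallest admissible even part in the odd-$r$ case is $r+1$ rather than $r$, and confirming that splitting the two-color contribution into a finite single-color prefix times a squared infinite tail recombines correctly through the factorization above. Once that bookkeeping is pinned down, extracting the coefficient of $q^n$ in each regime completes the proof.
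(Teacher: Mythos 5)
Your proposal is correct: for odd $r$ the parts available in $P_e^{>r}(n)$ are all odd integers together with the even integers $r+1, r+3, \dots$, giving $1/\bigl((q;q^2)_\infty (q^{r+1};q^2)_\infty\bigr)$, and for even $r$ the splitting $(q;q^2)_\infty = (q;q^2)_{r/2}\,(q^{r+1};q^2)_\infty$ correctly converts $1/\bigl((q;q^2)_{r/2}(q^{r+1};q^2)_\infty^2\bigr)$ into the same product; comparing coefficients with Theorem~\ref{thm:kre_genfn} then yields the corollary. However, this is precisely the route the paper attributes to Kaur, Rana, and Eyyunni --- the corollary is introduced with the remark that it ``follows from elementary interpretations of the generating function identity'' --- and it is genuinely different from the proof this paper actually develops. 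The paper's point is that KRE explicitly asked for a \emph{bijective} proof, and the paper supplies one in two stages: first a bijection between $P_r^{\textrm{mex}}(n)$ and the overpartition family $\overline{P}_r(n)$ of Definition~\ref{opr} (Theorem~\ref{mexseqover}, via an $\oplus$-decomposition of a partition with long mex sequence into a gap-free part, which is conjugated into the overlined parts, plus a residual partition with parts $\ge r$ of the right parity), and then Glaisher-style bijections from $\overline{P}_r(n)$ to $P_e^{>r}(n)$ when $r$ is odd (Proposition~\ref{rodd}) and to $P_{o,2}^{>r}(n)$ when $r$ is even (Proposition~\ref{reven}). Your argument is shorter and entirely mechanical, which is its virtue, but it gives no explicit correspondence between the objects counted; the paper's approach costs more work and buys the explicit maps, together with the structural insight that a single family of overpartitions mediates both parity cases uniformly.
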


In their concluding remarks, the authors write \cite[p.\ 1129]{kre}, ``It would be highly desirable to get a bijective proof of the identities for $p_r^{\textrm{mex}}(n)$ in Corollary 11.''  We will provide such proofs after establishing a connection between the mex sequence and an important partition generalization known as overpartitions.

An overpartition of a positive integer $n$ is a partition of $n$ wherein the first occurrence of a part may be overlined.  That is, any overlined parts must be distinct while there is no restriction on non-overlined parts.  We write $\overline{P}(n)$ for the set of of overpartitions of $n$ and $\overline{p}(n) = \vert \overline{P}(n) \vert$ with the convention that $\overline{p}(0) =1$.
For example, 
\begin{gather*}
\overline{P}(4) = \{ (4), (\overline{4}), (3, 1), (3, \overline{1}), (\overline{3}, 1), (\overline{3}, \overline{1}), (2, 2), (\overline{2},2), \\
(2, 1, 1), (2, \overline{1}, 1), (\overline{2}, 1, 1), (\overline{2}, \overline{1}, 1), (1, 1, 1, 1), (\overline{1}, 1, 1, 1)\}
\end{gather*}
and $\overline{p}(4) = 14$.  We will sometimes write an overpartition as $(\lambda, \mu)$ where $\lambda$ is a partition with distinct parts (corresponding to the overlined parts) and $|\lambda| + |\mu| = n$ so that, for example, $(\overline{2}, \overline{1}, 1)$ corresponds to $((2,1),(1))$.
Overpartitions were named by Corteel and Lovejoy in 2004 \cite{cl} although, as they discuss, equivalent sets of partitions had previously
been considered in various contexts.

It will be helpful to recall the generating functions for the number of partitions and overpartitions of $n$:
\[ \sum_{n \ge 0} p(n) q^n = \frac{1}{(q;q)_\infty}, \quad \sum_{n \ge 0} \overline{p}(n) q^n = \frac{(-q;q)_\infty}{(q;q)_\infty}.\]

We conclude this section by reviewing two classical maps on partitions from the 19th century detailed in \cite{ae}.

Using Ferrers's graphical representation of a partition, where each part corresponds to a row of dots, an obvious operation is conjugation which swaps rows and columns.  This map $\lambda \mapsto \lambda'$ is clearly an involution.  One consequence is the equal count of two types of restricted partitions.

\begin{prop} \label{nogaps}
The number of partitions of $n$ into distinct parts equals the number of partitions of $n$ where each part size from 1 to $\lambda_1$ appears at least once.
\end{prop}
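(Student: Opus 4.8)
The plan is to show that conjugation itself is the desired bijection: the map $\lambda \mapsto \lambda'$ restricts to a bijection between the partitions of $n$ into distinct parts and the partitions of $n$ in which each part size from $1$ to $\lambda_1$ occurs at least once (call these the \emph{no-gaps} partitions). Since conjugation is an involution on all partitions of $n$, it suffices to prove that $\lambda$ has distinct parts if and only if $\lambda'$ has the no-gaps property.

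First I would record the elementary dictionary between multiplicities and conjugate parts. Writing $m_i(\pi)$ for the number of parts of $\pi$ equal to $i$ and $\pi'_i$ for the $i$th part of the conjugate, the definition of conjugation gives $\pi'_i = |\{k : \pi_k \ge i\}|$, so that
\[
\pi'_i - \pi'_{i+1} = |\{k : \pi_k = i\}| = m_i(\pi)
\]
for $1 \le i \le \pi_1$, with the convention $\pi'_{\pi_1 + 1} = 0$. In words, the successive differences of the conjugate read off the multiplicities of the original partition.

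With this identity in hand the equivalence is essentially immediate. A partition $\pi$ has the no-gaps property exactly when $m_i(\pi) \ge 1$ for every $1 \le i \le \pi_1$, which by the displayed identity is equivalent to $\pi'_i - \pi'_{i+1} \ge 1$ for all such $i$; that is, $\pi'_1 > \pi'_2 > \cdots > \pi'_{\pi_1} > 0$, which says precisely that $\pi'$ has distinct parts. Reading the chain in the reverse direction, and using $(\pi')' = \pi$, shows conversely that $\pi$ has distinct parts if and only if $\pi'$ has no gaps. Hence conjugation interchanges the two families, and the two counts agree.

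The argument is short enough that there is no serious obstacle; the only point needing care is the boundary term. I would verify that the final strict inequality $\pi'_{\pi_1} > 0$ is automatic, since $\pi'_{\pi_1} = m_{\pi_1}(\pi) \ge 1$ because $\pi_1$ is itself a part, and I would confirm that $\pi'_1, \ldots, \pi'_{\pi_1}$ are exactly all the parts of $\pi'$. To make the gap-filling effect of conjugation concrete, I would also trace a small example, such as the distinct-parts partition $(4,2,1)$ of $7$ mapping to the no-gaps partition $(3,2,1,1)$.
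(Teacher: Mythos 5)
Your proposal is correct and takes exactly the paper's approach: the paper also obtains this result from the conjugation involution $\lambda \mapsto \lambda'$ (citing Sylvester and Franklin rather than writing out details), and your identity $\pi'_i - \pi'_{i+1} = m_i(\pi)$ supplies precisely the verification the paper leaves implicit, including the boundary check $\pi'_{\pi_1} = m_{\pi_1}(\pi) \ge 1$. No gaps here --- in either sense.
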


Although not stated directly, this follows from a discussion in Sylvester and Franklin's major 1882 work \cite[pp.\ 272--274]{sf}.  Note that partitions where each part size from 1 to $\lambda_1$ appears at least once, also called partitions with no gaps,  have mex $\lambda_1 + 1$ and an infinite mex sequence.
See Table \ref{conj} for the connection between such partitions of 6.    In such examples, we use a shorthand notation for partitions including superscripts for repetition, e.g., $21^4$ for $(2,1,1,1,1)$.

\begin{table}[h]
\renewcommand{\arraystretch}{1.5}
\begin{tabular}{r|cccc}
$P(6)$ distinct parts & 6 & 51 & 42 & 321 \\ \hline
$P(6)$ no gaps & $1^6$ & $21^4$ & $2211$ & $321$
\end{tabular}
\medskip
\caption{The $n = 6$ example of the types of partitions in Proposition \ref{nogaps}, connected by conjugation.}  \label{conj}
\end{table}

In one of the first published results on partitions, Euler used generating functions to show that the number of partitions of $n$ into distinct parts equals the number of partitions of $n$ into parts all odd.  Using Pochhammer notation, this is simply encoded by the generating function identity
\begin{equation}
(-q;q)_\infty = \frac{1}{(q;q^2)_\infty}. \label{edo}
\end{equation}

Glaisher provided a combinatorial proof of \eqref{edo} in 1883 as part of a generalization of Euler's result \cite{g}.  It is easily described as follows:  In a partition which contains distinct parts, the even parts are split into halves until all parts in the newly-obtained partition are odd. Conversely, in a partition with odd parts, any two repeated parts are merged into a single part (of twice the size) until all parts in the new partition are distinct.  Clearly, this operation produces an involution.  

See Table \ref{gl6} for the connection between such partitions of 6.

\begin{table}[h]
\renewcommand{\arraystretch}{1.5}
\begin{tabular}{r|cccc}
$P(6)$ distinct parts & 6 & 51 & 42 & 321 \\ \hline
$P(6)$ odd parts & 33 & 51 & $1^6$ & $31^3$
\end{tabular}
\medskip
\caption{Glaisher's map for $n = 6$.}  \label{gl6}
\end{table}

In the next section, we establish a bijection between $P_r^{\textrm{mex}}(n)$ and a certain family of overpartitions.  With this is hand, we satisfy Kaur, Rana, and Eyyunni's request for combinatorial proofs of Corollary \ref{needcomb}.

\section{Overpartitions and bijections}

We begin with an unexpected connection between certain restricted overpartitions and regular partitions.  

Clearly, overpartitions with no overlined parts are simply partitions.  We show that the number of overpartitions of $n$ where any non-overlined parts are even also equals $p(n)$.

\begin{prop} \label{r1}
There are $p(n)$ overpartitions of $n$ where any non-overlined parts are even, i.e., overpartitions $(\lambda, \mu)$ where $\mu$ consists of only even parts.
\end{prop}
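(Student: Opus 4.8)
The plan is to bypass generating functions entirely and exhibit an explicit bijection between $P(n)$ and the overpartitions of $n$ whose non-overlined parts are all even, using Glaisher's map as the engine. First I would take an arbitrary partition $\pi \in P(n)$ and separate its parts into the submultiset $O$ of odd parts and the submultiset $E$ of even parts, so that $|O| + |E| = n$. Since $O$ is a partition into odd parts, Glaisher's map (merging pairs of equal odd parts into a single part of twice the size until all parts are distinct) carries $O$ to a partition $\lambda$ into distinct parts with $|\lambda| = |O|$. I would then overline every part of $\lambda$ and adjoin the even parts $E$ as the non-overlined parts, producing an overpartition $(\lambda, E)$ of $n$ whose overlined parts are distinct and whose non-overlined parts are all even—precisely the objects being counted.

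For the inverse map, starting from such an overpartition $(\lambda, \mu)$, I would run Glaisher's map in the opposite direction on the distinct overlined parts $\lambda$, splitting even parts in half until every part is odd, to recover a partition into odd parts; combining these odd parts with the even parts $\mu$ then reconstitutes a partition of $n$. Because Glaisher's map is a bijection between partitions into distinct parts and partitions into odd parts, the two constructions are mutually inverse, and the count follows.

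The hard part will be nothing more than a point of care rather than a genuine obstacle: I must note that overlining interacts harmlessly with repeated part sizes. An overpartition is permitted to contain an overlined part and a non-overlined part of the same size, so the distinct parts emerging from Glaisher's map and the even parts retained from $E$ never conflict, even when they coincide. Once that is observed, well-definedness in both directions is immediate, since Glaisher's map only ever manipulates the odd-versus-distinct parts and leaves the even non-overlined block $\mu$ completely untouched.

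As a sanity check, the generating function for these overpartitions factors as $(-q;q)_\infty$ (distinct overlined parts) times $1/(q^2;q^2)_\infty$ (arbitrarily repeated even non-overlined parts); applying Euler's identity \eqref{edo} together with the factorization $(q;q)_\infty = (q;q^2)_\infty (q^2;q^2)_\infty$ collapses this product to $1/(q;q)_\infty$, confirming that the total is indeed $p(n)$.
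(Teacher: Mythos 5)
Your proof is correct and takes essentially the same approach as the paper: both arguments apply Glaisher's merging map to the odd parts of a partition, overline the resulting distinct parts, keep the even parts non-overlined, and invert by splitting the overlined parts back into odd parts and reuniting with $\mu$. Your added remarks---that an overlined and a non-overlined part may coincide in size without conflict, and the generating-function sanity check via $(q;q)_\infty = (q;q^2)_\infty (q^2;q^2)_\infty$---are harmless supplements to the same bijection.
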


\begin{proof}
We establish a bijection between $P(n)$ and the described subset of $\overline{P}(n)$.

Given a partition $\kappa$ of $n$, apply Glaisher's map to any odd parts and overline the resulting distinct parts while leaving any even parts of $\kappa$ non-overlined.  This produces an overpartition of $n$ where any non-overlined parts are even.

Given an overpartition $(\lambda,\mu)$ of $n$ where all parts of $\mu$ are even, apply Glaisher's map to $\lambda$ and take the union of the resulting odd parts and $\mu$ to make an ordinary partition of $n$.

Because Glaisher's map establishes a bijection, the maps described here are inverses.
\end{proof}

See Table \ref{r1ex} for an example of the maps in the proof of Proposition \ref{r1}.

\begin{table}[h]
\renewcommand{\arraystretch}{1.5}
\begin{tabular}{r|ccccccccccc}
$P(6)$ & 6 & 51 & 42 & 411 & 33 & 321 & $31^3$ & $2^3$ & 2211 & $21^4$ & $1^6$ \\ \hline
overpartitions & 6 & $\overline{51}$ & 42 & $4\overline{2}$ & $\overline{6}$ & $\overline{3}2\overline{1}$ & $\overline{321}$ & $2^3$ & $\overline{2}22$ & $\overline{4}2$ & $\overline{42}$ 
\end{tabular}
\medskip
\caption{The $n = 6$ example of the correspondence of Proposition \ref{r1}.}  \label{r1ex}
\end{table}

Recall the observation that $P_1^{\textrm{mex}}(n) = P(n)$.  Proposition \ref{r1} then suggests a connection between partitions with longer mex sequences and more restricted overpartitions.

\begin{definition} \label{opr}
For any integer $r \ge 1$, let $\overline{P}_r(n)$ be the set of overpartitions of $n$ where overlined parts have no restrictions (besides being distinct) and non-overlined parts must be greater than $r$ and have the same parity as $r+1$, and let $\overline{p}_r(n) = \vert \overline{P}_r(n) \vert$.  
\end{definition}

The tables after the following results include several examples of $\overline{P}_r(n)$ for various $r$ and $n$.  

With this notation, Theorem \ref{r1} establishes $\overline{p}_1(n) = p_1^{\textrm{mex}}(n)$ for all $n$.  This is the first case of a general result.  

\begin{theorem} \label{mexseqover}
For any integer $r \ge 1$, $\overline{p}_r(n) = p_r^{\textrm{mex}}(n)$ for all $n$.
\end{theorem}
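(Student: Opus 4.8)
The plan is to prove $\overline{p}_r(n) = p_r^{\textrm{mex}}(n)$ by exhibiting an explicit bijection $\Phi\colon P_r^{\textrm{mex}}(n) \to \overline{P}_r(n)$ that specializes to the map of Proposition~\ref{r1} when $r = 1$. As a consistency check that also guides the construction, I would first verify the identity at the level of generating functions. Reading Definition~\ref{opr} directly, the overlined parts of a member of $\overline{P}_r(n)$ form an arbitrary partition into distinct parts, while the non-overlined parts range over all integers that are $> r$ and congruent to $r+1$ modulo $2$, namely $r+1, r+3, r+5, \dots$, so that
\[
\sum_{n \ge 0} \overline{p}_r(n)\, q^n = (-q;q)_\infty \cdot \frac{1}{(q^{r+1};q^2)_\infty}.
\]
Applying Euler's identity \eqref{edo} in the form $(-q;q)_\infty = 1/(q;q^2)_\infty$ turns the right-hand side into $1/\bigl((q;q^2)_\infty (q^{r+1};q^2)_\infty\bigr)$, which is exactly the generating function for $p_r^{\textrm{mex}}(n)$ recorded in Theorem~\ref{thm:kre_genfn}. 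This confirms the count and, more importantly, tells me which classical maps a bijective proof must use: Euler's identity is realized combinatorially by Glaisher's map, and the factor $1/(q^{r+1};q^2)_\infty$ is to be matched against the non-overlined parts.

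To build $\Phi$ itself, I would first reinterpret membership in $P_r^{\textrm{mex}}(n)$ through conjugation. If $\lambda$ has mex $m$, then the parts $1, \dots, m-1$ all occur and $m, \dots, m+r-1$ are all absent; since the size-$i$ part occurs in $\lambda$ exactly when $\lambda'_i > \lambda'_{i+1}$, this is equivalent to the conjugate $\lambda'$ satisfying
\[
\lambda'_1 > \lambda'_2 > \cdots > \lambda'_m = \lambda'_{m+1} = \cdots = \lambda'_{m+r},
\]
with all remaining parts weakly below the common plateau value. Thus, after conjugation, a partition in $P_r^{\textrm{mex}}(n)$ is precisely one that begins with $m-1$ strictly decreasing parts and then repeats a single value at least $r+1$ times. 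This is the structural core I would exploit: the mandatory staircase $1, \dots, m-1$ becomes, via Proposition~\ref{nogaps}, a block of $m-1$ distinct parts destined to be overlined, while the forced multiplicity $\ge r+1$ of the plateau value is what feeds the non-overlined parts, whose smallest admissible size is exactly $r+1$.

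The remaining, and hardest, step is to convert this conjugated data into an overpartition with both required properties (overlined parts distinct, non-overlined parts $> r$ and of parity $r+1$) reversibly. Here I expect to apply Glaisher's merging and splitting involution to the parts that are not already forced to be distinct, using it to correct parities so that the non-overlined parts land in the residue class of $r+1$; the distinct parts arising from the staircase, together with any additional parts Glaisher renders distinct, are then overlined, exactly as in the $r=1$ case of Proposition~\ref{r1}. The main obstacle is showing that this routing is a genuine bijection rather than merely count-preserving: I must verify that the size threshold $> r$ and the single parity condition on the non-overlined parts correspond \emph{precisely} to the mex-sequence length being at least $r$, with no part mis-sized or mis-routed at the boundary, and that the reverse map reconstructs both the mandatory staircase $1, \dots, m-1$ and the forbidden gap $m, \dots, m+r-1$. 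Establishing that the two maps are mutually inverse, and checking that they reduce to Proposition~\ref{r1} at $r = 1$, is where the real work lies.
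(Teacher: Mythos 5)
Your generating-function computation is complete, correct, and is verbatim the paper's first proof of Theorem \ref{mexseqover}: the overlined parts contribute $(-q;q)_\infty$, the non-overlined parts $r+1, r+3, r+5, \dots$ contribute $1/(q^{r+1};q^2)_\infty$, and Euler's identity \eqref{edo} matches this to Theorem \ref{thm:kre_genfn}. So the statement itself is established by your first paragraph alone. Your conjugate reformulation of membership in $P_r^{\textrm{mex}}(n)$ --- a strict staircase $\lambda'_1 > \cdots > \lambda'_m$ followed by a plateau $\lambda'_m = \cdots = \lambda'_{m+r}$ --- is also correct, and is indeed the conjugate shadow of the structure the paper exploits.

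The genuine gap is in the bijection, which you frame as the real goal but never define, and the mechanism you gesture at would not work. Glaisher's involution exchanges repeated parts for distinct parts (equivalently, odd parts for distinct parts); it has no way to enforce the size threshold $>r$ on the non-overlined parts, and for odd $r$ those parts must be \emph{even}, the opposite of what Glaisher naturally produces. In the paper, Glaisher appears only in Propositions \ref{r1}, \ref{rodd}, and \ref{reven}; the combinatorial proof of Theorem \ref{mexseqover} uses an entirely different device, which is the idea missing from your sketch: writing $\kappa = (\kappa_1,\ldots,\kappa_i,\sigma_{i+1},\ldots,\sigma_j)$ with $(\sigma_{i+1},\ldots,\sigma_j)$ the maximal no-gaps tail, one subtracts from each large part $\kappa_\ell$ a staircase value $\sigma_\ell$, built upward from $\sigma_{i+1} = \mex(\kappa)-1$ with $\sigma_\ell \in \{\sigma_{\ell+1}, \sigma_{\ell+1}+1\}$ chosen at each step so that $\kappa_\ell - \sigma_\ell$ has the parity of $r+1$. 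The mex-sequence hypothesis $\kappa_i \ge \sigma_{i+1}+r+1$ is exactly what guarantees every difference is at least $r$, hence (by parity) greater than $r$; the augmented $(\sigma_1,\ldots,\sigma_j)$ remains a no-gaps partition, and its conjugate supplies the distinct overlined parts via Proposition \ref{nogaps}, with inverse simply $(\lambda,\mu) \mapsto \lambda' \oplus \mu$. A further symptom that your plan was mis-aimed: the design constraint you imposed, that the general map specialize at $r=1$ to the Glaisher map of Proposition \ref{r1}, is not satisfied by the paper's bijection --- for $r=1$ it sends $(3)$ to $(2,\overline{1})$, whereas Glaisher sends $(3)$ to $(\overline{3})$ --- so insisting on that specialization steered you toward Glaisher and away from the staircase-subtraction idea that actually carries the proof.
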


We provide both a generating function proof and, in the spirit of the remaining results, a combinatorial proof of this connection.  The latter will use a part-wise combination of partitions explained by the example $(4,2) \oplus (5,1,1) = (9,3,1)$.

\begin{proof}
Combining Theorem \ref{thm:kre_genfn} with Euler's identity \eqref{edo} gives
\[ \sum_{n\ge0} p_r^{\textrm{mex}}(n) q^n = \frac{1}{(q;q^2)_\infty (q^{r+1};q^2)_\infty} = \frac{(-q;q)_\infty}{(q^{r+1};q^2)_\infty}\]
which is the generating function for the overpartitions $\overline{P}_r(n)$ (with the restriction on non-overlined parts apparent in the denominator).

For the combinatorial proof, we establish a bijection between $\overline{P}_r(n)$ and $P_r^{\textrm{mex}}(n)$.

Given $\kappa \in P_r^{\textrm{mex}}(n)$, if the mex sequence is infinite, then $\kappa$ has no gaps and $\kappa'$ is a distinct part partition by Proposition \ref{nogaps}; send $\kappa$ to the overpartition $(\kappa', \varnothing)$ which, with only overlined parts, is trivially in $\overline{P}_r(n)$.  Otherwise, write
\[\kappa = (\kappa_1, \ldots, \kappa_i, \sigma_{i+1}, \ldots, \sigma_j)\]
where $\kappa_i > \mex(\kappa)$ and $\sigma_{i+1} = \mex(\kappa)-1$ (possibly 0).  We describe how to write $\kappa$ as
\[\kappa = (\kappa_1 - \sigma_1, \ldots, \kappa_i - \sigma_i) \oplus (\sigma_1, \ldots, \sigma_j)\]
where $(\kappa_1 - \sigma_1, \ldots, \kappa_i - \sigma_i)$ is a partition with each part at least $r$ and $(\sigma_1, \ldots, \sigma_j)$ is a partition with no gaps.

Since $\kappa \in P_r^{\textrm{mex}}(n)$, in fact $\kappa_i \ge \sigma_{i+1}+r+1$.  Working from $\ell = i$ to $\ell = 1$, let
\[\sigma_\ell = \begin{cases} \sigma_{\ell + 1} & \text{if $\kappa_\ell - \sigma_{\ell+1} \equiv r \bmod{2}$,} \\ \sigma_{\ell+1} + 1 & \text{if $\kappa_\ell - \sigma_{\ell+1} \not\equiv r \bmod{2}$.} \end{cases} \]
Note that $\kappa_i - \sigma_i \ge \sigma_{i+1}+r+1 - (\sigma_{i+1}+1) = r$, as desired.  Further, if $\kappa_{i-1} = \kappa_i$, then $\sigma_{i-1} = \sigma_i$ gives $\kappa_{i-1} - \sigma_{i-1}$ the necessary parity and $\kappa_{i-1} - \sigma_{i-1} = \kappa_i - \sigma_i \ge r$.  Otherwise, $\kappa_{i-1} \ge \kappa_i + 1$ and 
\[\kappa_{i-1} - \sigma_{i-1} \ge \kappa_i +1 - (\sigma_i+1) = \kappa_i - \sigma_i \ge r. \]
Continuing in this way, we see that $(\kappa_1 - \sigma_1, \ldots, \kappa_i - \sigma_i)$ is a partition with each part at least $r$.  Since $(\sigma_{i+1},\ldots,\sigma_j)$ is a partition with no gaps and each $\sigma_\ell$ satisfies $\sigma_{\ell+1} \le \sigma_\ell \le \sigma_{\ell+1}+1$, the resulting $(\sigma_1,\ldots,\sigma_j)$ is also a partition with no gaps so that, by Proposition \ref{nogaps}, its conjugate has distinct parts.  Assign $\kappa$ to the overpartition
\[ ((\sigma_1,\ldots,\sigma_j)', (\kappa_1 - \sigma_1, \ldots, \kappa_i - \sigma_i)).\]

Given $(\lambda, \mu) \in \overline{P}_r(n)$, send the overpartition to $\kappa = \lambda' \oplus \mu \in P(n)$.  We need to show that $\kappa$ has mex sequence of length at least $r$.
Writing $\lambda = (\lambda_1, \ldots, \lambda_j)$, by Proposition \ref{nogaps}, the conjugate $\lambda'$ has first part $j$, length $\lambda_1$, and no gaps.  Suppose $\mu$ consists of $m$ parts (which are all greater than $r$).  

If $m=0$, then $\kappa = \lambda'$ has mex $j+1$ and an infinite mex sequence.    

If $1 \le m < \lambda_1$, then $\kappa_m > \lambda'_m + r$ and $\kappa_{m+1} = \lambda'_{m+1}$ which is either $\lambda_m$ or $\lambda_m - 1$ and every smaller part size appears at least once since $\lambda'$ has no gaps.  Therefore $\kappa$ has mex sequence at least $r$.  

Finally, if $m \ge \lambda_1$, then $\kappa = \lambda' \oplus \mu$ has each part greater than $r$ and mex sequence at least $r$.

In each case, $\kappa \in P_r^{\textrm{mex}}(n)$.

It is straightforward to confirm that each map is injective, so that together the maps establish a bijection, completing the combinatorial proof.
\end{proof}

Several examples of the maps in the combinatorial proof of Theorem \ref{mexseqover} follow, along with Table \ref{rex}.

\begin{example}
(a) The partition $\kappa = (8,7,3,2,1,1)$ has mex sequence $(4,5,6)$.  For $r = 2$, the non-overlined parts of overpartitions in $\overline{P}_2(22)$ need to be odd and greater than 2.  Therefore $\sigma_2 = 4$ giving $7 - 4 = 3$ and $\sigma_1 = 5$ giving $8-5 = 3$ so that 
\[(8,7,3,2,1,1) = (3,3) \oplus (5,4,3,2,1,1) \mapsto (\overline{6}, \overline{4}, \overline{3}, 3, 3, \overline{2}, \overline{1}).\]
(b) For $r=3$, the same partition $\kappa$ has $\sigma_2 = 3$ giving $7-3=4$ and $\sigma_1 = 4$ giving $8-4=4$.  Therefore,
\[(8,7,3,2,1,1) = (4,4) \oplus (4,3,3,2,1,1) \mapsto  (\overline{6}, \overline{4}, 4, 4, \overline{3}, \overline{1}) \in \overline{P}_3(22).\]
(c) For $r = 2$, the overpartition $(\overline{5},5,\overline{3},3,3,\overline{2},\overline{1}) \in \overline{P}_2(22)$ has $\lambda = (5,3,2,1)$ with conjugate $\lambda' = (4,3,2,1,1)$ and $\mu = (5,3,3)$ so that it maps to
\[ (4,3,2,1,1) \oplus (5,3,3) = (9,6,5,1,1) \in P(22)\]
which has mex sequence $(2,3,4)$ and is thus an element of $P_2^{\textrm{mex}}(22)$.
\end{example}

\begin{table}[h]
\renewcommand{\arraystretch}{1.5}
\begin{tabular}{r|cccccccccc}
$P_2^{\textrm{mex}}(7)$ & 7 & 61 & 511 & 43 & $41^3$ & 3211 & $2^31$ & $221^3$ & $21^5$ & $1^7$ \\ \hline
$\overline{P}_2(7)$ & 7 & $5\overline{2}$ & $\overline{3}3\overline{1}$ & $33\overline{1}$ & $\overline{4}3$ & $\overline{421}$ & $\overline{43}$ & $\overline{52}$ & $\overline{61}$ & $\overline{7}$ \\ \hline \hline
$\overline{P}_3(7)$ & $\overline{7}$ & $6\overline{1}$ & $\overline{61}$ & $\overline{52}$ & $4\overline{3}$ & $\overline{43}$ & $4\overline{21}$ & $\overline{421}$ \\ \hline
$P_3^{\textrm{mex}}(7)$ & $1^7$ & 7 & $21^5$ & $221^3$ & 511 & $2^31$ & 61 & 3211 
\end{tabular}
\medskip
\caption{The $n = 7$ cases of the correspondence of Theorem \ref{mexseqover} for $r = 2$ and $r = 3$.}  \label{rex}
\end{table}

With this unified interpretation of $p_r^{\textrm{mex}}(n)$ in terms of overpartitions, we can provide combinatorial proofs of Corollary \ref{needcomb} requested by Kaur, Rana, and Eyyunni.  Given the very different nature of their combinatorial interpretations depending on the parity of $r$, we give two bijections.

\begin{prop} \label{rodd}
For each odd $r \ge 1$, $\overline{p}_r(n) = p_e^{>r}(n)$ for all $n$.
\end{prop}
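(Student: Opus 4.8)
The plan is to give a direct bijection in the style of the proof of Proposition~\ref{r1}, since for odd $r$ the situation is only a mild refinement of that case. First I would record what the two sets look like. Because $r$ is odd, $r+1$ is even, so $\overline{P}_r(n)$ consists of overpartitions $(\lambda,\mu)$ in which $\lambda$ is an arbitrary distinct-part partition (the overlined parts) and $\mu$ is a partition into even parts each greater than $r$. On the other side, $P_e^{>r}(n)$ consists of partitions whose even parts are all greater than $r$, with no restriction on the odd parts. The key observation is that the constraint ``even and greater than $r$'' is identical on both sides, so it can simply be carried along untouched while Glaisher's map handles the odd/distinct interchange.

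The map itself: given $(\lambda,\mu)\in\overline{P}_r(n)$, apply Glaisher's map to the distinct-part partition $\lambda$ to obtain a partition into odd parts, then form the union of those odd parts with $\mu$. Since Glaisher's map sends distinct parts to odd parts, the resulting odd parts are unrestricted, while the even parts contributed by $\mu$ are all greater than $r$; hence the union lies in $P_e^{>r}(n)$. Conversely, given a partition in $P_e^{>r}(n)$, separate its parts by parity, apply the inverse of Glaisher's map to the (unrestricted) odd parts to produce distinct parts which we overline, and leave the even parts---all greater than $r$---non-overlined. This produces an overpartition in $\overline{P}_r(n)$.

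To finish, I would note that because Glaisher's map is a bijection between partitions into distinct parts and partitions into odd parts, and because the even parts are never altered and satisfy the same ``$>r$'' condition in both directions, the two maps are mutually inverse. The one point deserving a sentence of care is that the parity decomposition is genuinely clean: Glaisher's map applied to a distinct-part partition yields only odd parts (never even ones), so the odd parts of the image come exactly from $\lambda$ and the even parts come exactly from $\mu$; this is what guarantees that the inverse recovers $(\lambda,\mu)$ unambiguously (and there is no conflict if some overlined part happens to equal a non-overlined part, since overpartitions track the two separately). I do not expect a real obstacle here---the content lies entirely in Glaisher's classical involution, and the proposition is essentially Proposition~\ref{r1} with the harmless extra bookkeeping that the even parts must exceed $r$. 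As a consistency check one could also observe that the Euler identity $(-q;q)_\infty = 1/(q;q^2)_\infty$ turns the generating function $(-q;q)_\infty/(q^{r+1};q^2)_\infty$ for $\overline{P}_r(n)$ into $1/\bigl((q;q^2)_\infty (q^{r+1};q^2)_\infty\bigr)$, which is exactly the generating function for $P_e^{>r}(n)$.
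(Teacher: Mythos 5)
Your proposal is correct and matches the paper's proof essentially verbatim: both directions use Glaisher's map on the odd/distinct parts while carrying the even parts (all greater than $r$) along unchanged, with the parity separation guaranteeing the maps are mutually inverse. The generating function consistency check is a harmless addition not present in the paper's argument.
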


The $r=1$ case is Proposition \ref{r1}.  In fact, as detailed next, the result for any odd $r$ follows from the same map applied to the appropriate subsets.

\begin{proof}
Given $r \ge 1$ odd, we establish a bijection between $P_e^{>r}(n)$ and $\overline{P}_r(n)$.

Given a partition $\kappa \in P_e^{>r}(n)$, apply Glaisher's map to any odd parts and overline the resulting distinct parts while leaving any even parts of $\kappa$ (necessarily greater than $r$) non-overlined.  This produces an overpartition of $n$ where any non-ovelined parts are even and greater than $r$, i.e., an element of $\overline{P}_r(n)$.

Given an overpartition $(\lambda, \mu) \in \overline{P}_r(n)$, note that $\mu$ consists entirely of even parts greater than $r$.  Apply Glaisher's map to $\lambda$ and take the union of the resulting odd parts and $\mu$ to make a partition in $P_e^{>r}(n)$.

As explained in the proof of Proposition \ref{r1}, these maps are inverses.
\end{proof}

Table \ref{r1ex} includes examples of the maps in the proof of Proposition \ref{rodd} since $P_e^{>3}(6) \subset P(6)$ and $P_e^{>5}(6) \subset P(6)$.  Table \ref{r3ex} gives another example.

\begin{table}[h]
\renewcommand{\arraystretch}{1.5}
\begin{tabular}{r|ccccccccccc}
$P_e^{>3}(8)$ & 8 & 71 & 611 & 53 & $51^3$ & 44 & 431 & $41^4$ & 3311 & $31^5$ & $1^8$  \\ \hline
$\overline{P}_3(8)$ & 8 & $\overline{71}$ & $6\overline{2}$ & $\overline{53}$ & $\overline{521}$ & 44 & $4\overline{31}$ & $\overline{4}4$ & $\overline{62}$ & $\overline{431}$ & $\overline{8}$ 
\end{tabular}
\medskip
\caption{The $n = 8$, $r = 3$ example of the correspondence of Proposition \ref{rodd} which includes the $r = 5$ and $r = 7$ cases.}  \label{r3ex}
\end{table}

Combined with Theorem \ref{mexseqover}, Proposition \ref{rodd} gives a combinatorial proof of Corollary \ref{needcomb} for the odd $r$ case, as desired.

Finally, we proceed to the even $r$ case.

\begin{prop} \label{reven}
For each even $r \ge 2$, $\overline{p}_r(n) = p_{o,2}^{>r}(n)$ for all $n$.
\end{prop}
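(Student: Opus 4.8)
The plan is to construct an explicit bijection between $\overline{P}_r(n)$ and $P_{o,2}^{>r}(n)$ for even $r$, again built from Glaisher's map but now supplemented by a two-coloring of the large parts. Since $r$ is even, $r+1$ is odd, so the non-overlined parts of an overpartition in $\overline{P}_r(n)$ are exactly the odd parts exceeding $r$; this already matches the ``odd parts, with the large ones two-colored'' shape of $P_{o,2}^{>r}(n)$, and the two color classes of large parts will record whether a part comes from the overlined portion $\lambda$ or from the non-overlined portion $\mu$. Because $r$ is even, no odd part equals $r$, so every odd part is unambiguously either at most $r-1$ (``small'') or at least $r+1$ (``large''); this is what keeps the threshold clean, and it is the structural difference from the odd-$r$ case of Proposition \ref{rodd}.

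For the forward map I would begin with $(\lambda,\mu)\in\overline{P}_r(n)$, where $\lambda$ has distinct parts and $\mu$ consists of odd parts greater than $r$. Applying Glaisher's map to $\lambda$ produces an odd partition $\tilde\lambda$ (now with repetition allowed). I would then assign the single available color to each small part of $\tilde\lambda$, the first color to each large part of $\tilde\lambda$, and the second color to every part of $\mu$ (all of which are large). The output is a partition of $n$ into odd parts in which each large part carries one of two colors and each small part carries the one permitted color, i.e., an element of $P_{o,2}^{>r}(n)$.

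For the inverse I would take a partition in $P_{o,2}^{>r}(n)$, collect its small parts together with its first-color large parts into one odd partition, and apply the inverse of Glaisher's map to recover a distinct-part partition $\lambda$; the multiset of second-color large parts becomes $\mu$. This lands in $\overline{P}_r(n)$.

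The step I expect to require the most care is confirming that these maps are mutually inverse, though I anticipate it is bookkeeping rather than a genuine obstacle. The key observation is that $\mu$ can never contribute a small part, so every small part of the image must have originated in $\tilde\lambda$; combined with the color labels on the large parts, this recovers precisely the odd partition $\tilde\lambda$ that was fed into Glaisher, and the bijectivity of Glaisher's map on $\lambda\leftrightarrow\tilde\lambda$ then closes the loop. As a consistency check, the construction should refine the factorization $(-q;q)_\infty/(q^{r+1};q^2)_\infty = 1/\bigl((q;q^2)_{r/2}\,(q^{r+1};q^2)_\infty^2\bigr)$ obtained from Euler's identity \eqref{edo}: the factor $1/(q;q^2)_\infty$ coming from the overlined parts splits as small odd parts times one copy of the large odd parts, while $\mu$ furnishes the second copy, matching the two-colored factor term by term.
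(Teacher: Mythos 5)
Your proposal is correct and is essentially the paper's own proof: the paper constructs the identical bijection (Glaisher's map applied to the overlined parts, with first color for the Glaisher image and second color for $\mu$), merely stating the forward map in the opposite direction, from $P_{o,2}^{>r}(n)$ to $\overline{P}_r(n)$. Your generating-function factorization $(-q;q)_\infty/(q^{r+1};q^2)_\infty = 1/\bigl((q;q^2)_{r/2}\,(q^{r+1};q^2)_\infty^2\bigr)$ is a valid extra consistency check not in the paper, but the combinatorial argument is the same.
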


\begin{proof}
Given $r \ge 2$ even, we establish a bijection between $P_{o,2}^{>r}(n)$ and $\overline{P}_r(n)$.

Given $\nu \in P_{o,2}^{>r}(n)$, write $\nu = (\pi, \rho)$ where $\pi$ consists of any odd parts of the first color (including any odd parts less than $r$) and $\rho$ consists of any odd parts of the second color (each at least $r$).  Apply Glaisher's map to $\pi$, overline the resulting distinct parts, and leave any parts of $\rho$ non-overlined.  This produces an overpartition of $n$ where any non-overlined parts are odd and greater than $r$, i.e., an element of $\overline{P}_r(n)$.

Given an overpartition $(\lambda, \mu) \in \overline{P}_r(n)$, note that $\mu$ consists entirely of odd parts greater than $r$.  Apply Glaisher's map to $\lambda$, assign the resulting odd parts the first color, and assign the parts of $\mu$ the second color.  The union of these colored odd parts is an element of $P_{o,2}^{>r}(n)$.

Since Glaisher's map is a bijection, it is clear that these maps are inverses.
\end{proof}

See Table \ref{r2ex} for an example of the maps in the proof of Proposition \ref{reven}.

\begin{table}[h]
\renewcommand{\arraystretch}{1.5}
\begin{tabular}{r|cccccccc}
$P_{o,2}^{>2}(6)$ & $5_11_1$ & $5_21_1$ & $3_13_1$ & $3_13_2$ & $3_23_2$ & $3_1(1_1)^3$ & $3_2(1_1)^3$ & $(1_1)^6$  \\ \hline
$\overline{P}_2(6)$ & $\overline{51}$ & $5\overline{1}$ & $\overline{6}$ & $\overline{3}3$ & 33 & $\overline{321}$ & $3\overline{21}$ & $\overline{42}$ 
\end{tabular}
\medskip
\caption{The $n = 6$, $r = 2$ example of the correspondence of Proposition \ref{reven}.}  \label{r2ex}
\end{table}

Combined with Theorem \ref{mexseqover}, Proposition \ref{reven} gives a combinatorial proof of Corollary \ref{needcomb} for the even $r$ case, as desired.

\section{Summary and future work}
Given the very different nature of $P_e^{>r}(n)$, a subset of regular partitions, and $P_{o,2}^{>r}(n)$, a restricted partition with parts of two colors, it is pleasantly surprising that a single type of overpartition allows bijections to both sets, allowing us to satisfy the request of Kaur, Rana, and Eyyunni for combinatorial proofs of Corollary \ref{needcomb} in a fairly unified way.  This suggests that the overpartitions $\overline{P}_r(n)$ of Definition \ref{opr} provide a more natural combinatorial interpretation of $P_r^{\textrm{mex}}(n)$, the partitions with mex sequence of length at least $r$.  We anticipate further connections between restricted overpartitions and various partition statistics related to the mex.

\end{document}